\pgfplotsset{compat=1.10}
\pgfplotsset{grid style={dotted, gray!80!white}}
\numberwithin{equation}{section}\newtheorem{theorem}{Theorem}[section]
\newtheorem{corollary}[theorem]{Corollary}\newtheorem{lemma}[theorem]{Lemma}
\theoremstyle{remark}
\theoremstyle{definition}
\newcommand{\supp}{\mathrm{supp\,}}
\newcommand{\R}{\mathbb{R}}
\newcommand{\HH}{\mathcal{H}}
\newcommand{\QQ}{\mathcal{Q}}
\begin{document}

\title{{A note on pointwise convergence for the Schr\"odinger equation}}

\author{Renato Luc\`a\and Keith M. Rogers}

\address{Departement Matematik und Informatik, Universit\"at Basel, 4051, Switzerland.
}
\email{renato.luca@unibasel.ch}

\address{Instituto de Ciencias Matem\'aticas CSIC-UAM-UC3M-UCM, Madrid, 28049, Spain.
}
\email{keith.rogers@icmat.es}

\thanks{Supported by the ERC grants 277778 and 676675, the MINECO grants SEV-2015-0554 and MTM2013-41780-P (Spain), and the NSF grant DMS-1440140 (MSRI, Berkeley, Spring 2017).
 }

\maketitle
\begin{abstract}
We consider Carleson's problem regarding pointwise convergence for the Schr\"o\-dinger equation. Bourgain recently 
proved that there is initial data, in $H^s(\R^n)$ with $s<\frac{n}{2(n+1)}$, for which the solution diverges on a set of 
nonzero Lebesgue measure. We provide a different example enabling the generalisation to fractional Hausdorff measure.
\end{abstract}

\section{Introduction}

Consider the  Schr\"odinger equation, $i \partial_{t} u +\Delta u=0$, in $\R^{n+1}$, with 
initial data $u(\cdot, 0) = u_0$ in the Bessel potential/Sobolev space defined by
$$
H^{s}(\R^{n}) = (1-\Delta)^{-s/2}L^2(\R^{n}):=\left\{ 
G_{s} * g \, : \,  g \in L^{2}(\R^n)
\right\} \, .
$$
The Bessel kernel $G_{s}$ is defined as usual via its Fourier transform; $\widehat{G}_{s} = (1 + |\cdot|^{2})^{- s/2}$.
In~\cite{Carl},  Carleson proposed the problem of identifying 
the exponents $s > 0$ for which
\begin{equation}\label{CarlesonProblem}
\lim_{t\to 0} u(x , t) = u_0(x), \qquad \text{a.e.} \quad x \in \R^n, \qquad \forall \  u_0\in H^s(\R^{n}) \, ,
\end{equation}
with respect to Lebesgue measure, and proved that \eqref{CarlesonProblem} holds as long as
 $s \geq 1/4$ and $n=1$. Dahlberg and Kenig  then showed that this condition is necessary,  providing a complete solution in the one-dimensional case~\cite{DahlKenig}.

 In higher dimensions, \eqref{CarlesonProblem} holds as long as
 $s > \frac{2n-1}{4n}$; see \cite{L, B}. 
 It was thought that $s\ge 1/4$ might also be sufficient in higher dimensions (see for example \cite{GS} or \cite{T}), however 
 Bourgain recently proved that~$s\ge\frac{n}{2(n+1)}$ is necessary \cite{Bnew}. Since then, Du, Guth and Li \cite{DGL} improved the sufficient condition in two dimensions to the almost sharp $s>1/3$. 
 
 Here we give a new proof of the necessary condition using a different example (fewer frequencies travelling in a skew direction; see \eqref{fg}). 
 We replace number theoretic arguments, via comparison with Guass sums, with ergodic arguments that exploit 
 the occasional complete absence of cancelation as in \cite{LuR2}. 
This permits us to generalise to fractional Hausdorff measure.
When $n=2$, the proof becomes much simpler as the ergodic arguments are trivial in that case.

\begin{theorem}\label{Thm:DirectConv}
Let $ (3n+1)/4 \leq  \alpha \leq n$.  
Then, for any 
\begin{equation}\label{INTVAL}
s < \frac{n}{2(n+1)}+\frac{n-1}{2(n+1)}(n-\alpha) \, ,
\end{equation}
there exists $u_0 \in H^{s}(\mathbb{R}^{n})$ such that 
$$
\limsup_{t \to 0} | u(x,t) | = \infty
$$ 
for all $x$ in a set 
of positive $\alpha$--Hausdorff measure. 
\end{theorem}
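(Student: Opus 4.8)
The plan is to construct $u_0$ as a superposition of wave packets adapted to a parabolic lattice, following the Dahlberg–Kenig/Bourgain philosophy but with the "skew" family of frequencies announced in \eqref{fg}. First I would fix a large parameter $R$ and choose frequencies $\xi$ in a $\sqrt R$-separated subset of a ball of radius $R$; in fact, to make the ergodic argument work I would take most of the frequency components aligned with one axis and only a thin $(n-1)$-dimensional slab of "skew" frequencies, so that the solution at a point $x$ and a carefully chosen time $t(x)\sim R^{-2}$ reduces, after extracting the dominant quadratic phase, to an exponential sum $\sum_{j} e(\,\theta_j x + \text{lower order}\,)$ in essentially one variable. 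The key point of the construction is that, for a suitable choice of the number $N$ of skew frequencies and their spacing, one arranges $\|u_0\|_{H^s}\lesssim 1$ precisely when $s$ is below the threshold in \eqref{INTVAL}, while the sum at time $t(x)$ has modulus $\gtrsim N^{\varepsilon}$ (in fact $\gtrsim$ a small power of $R$) on a large set of $x$.

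The heart of the matter is the lower bound for the exponential sum on a set of $x$ of large $\alpha$-dimensional content. Here I would replace Bourgain's comparison with Gauss sums by the ergodic/equidistribution mechanism of \cite{LuR2}: instead of showing the sum is large on average, one exhibits a positive-measure (indeed positive $\alpha$-Hausdorff-content) set of $x$ for which \emph{all} the phases $\theta_j x$ are simultaneously close to $0\pmod 1$, so that there is no cancelation at all and the sum is as large as its number of terms. Concretely, the set of such $x$ is (a rescaling of) a neighbourhood of a rational point / a Bohr-type set $\{x : \|\theta_j x\|\le \delta,\ \forall j\}$, and a Borel–Cantelli / mass-distribution argument over a sequence $R=R_k\to\infty$ upgrades "large on a set at each scale $R_k$" to "$\limsup_{t\to0}|u(x,t)|=\infty$ on a set of positive $\alpha$-Hausdorff measure." The constraint $(3n+1)/4\le\alpha\le n$ is exactly what is needed for the counting of lattice points in the skew slab to match the $\alpha$-content of the divergence set.

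In the order I would carry this out: (1) define $f,g$ and hence $u_0$ via \eqref{fg}, and record the Sobolev norm computation, pinning down how $s$ relates to the parameters $R,N$; (2) isolate the time $t(x)$ and the main phase, reducing $|u(x,t(x))|$ to the modulus of a one-dimensional-type exponential sum plus a controlled error; (3) prove the no-cancelation lemma — for each scale there is a set $E_R$ with $\alpha$-content bounded below, on which the sum has modulus $\gtrsim$ (number of terms), using the equidistribution/ergodic argument; (4) run the limsup construction across scales $R_k$, taking a nested intersection and applying a Frostman-type lemma to produce a measure supported on the divergence set, witnessing positive $\alpha$-Hausdorff measure.

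I expect step (3) to be the main obstacle. Getting genuine \emph{absence} of cancelation, rather than merely an $L^2$-type lower bound on average, forces one to work on a carefully chosen sublevel set of the phases, and one must simultaneously (a) keep that set large in $\alpha$-content, (b) control the error terms coming from the discarded lower-order phases uniformly on it, and (c) ensure the $R_k$ can be chosen so that the sets at different scales interact correctly (this is where the ergodic input, as opposed to a single equidistribution statement, becomes essential — and also where, for $n=2$, everything collapses to a trivial one-dimensional statement, as remarked after the theorem).
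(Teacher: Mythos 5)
Your outline matches the paper's strategy in its essentials: data of the form \eqref{fg}/\eqref{TestinfFunctBennFin}, coherence (total absence of cancelation) on lattice-like sets chosen via an ergodic lemma, and a limsup-over-scales argument for the Hausdorff measure. But as written the two decisive steps are announced rather than carried out, and in both places the version you describe would not quite work. First, the set on which there is no cancelation at a \emph{fixed} time is not large: for frequencies on the lattice $2\pi R^{1-\sigma}\mathbb{Z}^{n-1}$ the coherence set at time $t$ is a translate $X_0+t\theta$ of an $R^{\sigma-1}$-separated array of $\varepsilon R^{-1}$-cubes, whose measure and $\alpha$-content are far too small on their own; a Bohr-set count $\{\bar{x}:\|\theta_j\cdot \bar{x}\|\le\delta\ \forall j\}$ at a single time fares no better. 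Largeness is only recovered by taking the union over the $\simeq R^{1/2-2\sigma}$ admissible times $t\in R^{2\sigma-1}\mathbb{Z}\cap(x_1,x_1+R^{-1/2})$ and choosing the direction $\theta$ so that this union of translates is $\varepsilon R^{-\gamma/d}$-dense --- and the same $\theta$ must work uniformly in the base point $x_1$ of the time window, which is exactly the content of Lemma~\ref{erg} and Corollary~\ref{Corollary:ToroImpr} (the extension to arbitrary $a$). This is a genuinely $(n-1)$-dimensional equidistribution statement, not ``essentially one variable''; only the Dahlberg--Kenig factor in $x_1$ is one-dimensional, and the chosen times are of order $R^{-1}$, not $R^{-2}$.

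Second, in the multi-scale sum $u_0=\sum_j f_j$ you must control, at the time $t_j(x)$ chosen for scale $j$, the contribution of every other scale. The scales $k>2j$ and $k<j$ are harmless, but for $j<k\le 2j$ the bound $|S_N(t_j/(2\pi\lambda^j))f_{\theta_k}|\lesssim\lambda^{-k\delta}$ fails on exceptional sets $X^{k,\delta}_{\lambda^{k-j}t\theta_k}$, which must be excised from the coherence sets; one then has to verify that the resulting pseudo-cubes still carry $\HH^{\beta}_{\infty}$-content comparable to genuine cubes, and this is where the restriction $\beta>\frac{(n-1)(2\alpha+1)}{2(n+1)}$ (hence $\alpha\ge(3n+1)/4$) enters, together with $\sigma<1/4$ to keep the set of admissible times nonempty. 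Finally, for the measure-theoretic endgame the paper works on $(n-1)$-dimensional slices $\Gamma_{\!x_1}$, proves the density property \eqref{WWNTP} and invokes Falconer's large-intersection theorem, then recovers $\alpha$-measure in $\R^n$ by slicing; your Frostman/mass-distribution route could be made to work, but you would still need the quasi-lattice structure of the pseudo-cube centres and the content comparison to verify its hypotheses at every scale. None of these points is fatal to your plan --- they \emph{are} the plan --- but until they are supplied the argument is a restatement of the goal rather than a proof.
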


The study of this refined version of Carleson's problem was initiated by Sj\"ogren and Sj\"olin \cite{SS}. Theorem~\ref{Thm:DirectConv} improves \cite[Theorem 2]{LuR2}, although the result there holds for the full range $n/2\le \alpha\le n$ (with $\alpha<n/2$ the question was previously resolved in \cite{BBCR}).
It has been conjectured that $s\ge\frac{n}{2(n+1)}$ should also be sufficient in the $\alpha=n$ case; see \cite{DG}. If that were true, then \eqref{INTVAL} would represent the interpolating condition between two sharp results, and so it would be interesting to see if Theorem~\ref{Thm:DirectConv} could be extended to the range $n/2\le \alpha\le n$, or whether there is a discontinuity in behaviour as in the one-dimensional case. 

Indeed, defining
$$\alpha_n(s):=\sup_{u_0\in H^s(\R^n)}\mathrm{dim}\Big\{\ x\in\mathbb{R}^n\ :\ \limsup_{t\to0}|u(x,t)|=\infty \ \Big\} \, ,$$ 
where $\mathrm{dim}$ denotes the Hausdorff dimension, the combination of Theorem~\ref{Thm:DirectConv} with previous results yields 
\begin{equation*}
\alpha_n(s)\ge \left \{
\begin{array}{rcccccccl}
  &n   &\text{when}&\!\!\!\!\!& \!\!&s\!\!&< &\!\!\!\frac{n}{2(n+1)}&\\ [0.8ex]
 & n+\frac{n}{n-1}-\frac{2(n+1)s}{n-1}   &\text{when}& \frac{n}{2(n+1)}\!\!\!\!\!&\le\!\!& s\!\!&<&\!\!\!\frac{n+1}{8}&\\ [0.8ex]
  &n+1-\frac{2(n+2)s}{n} &\text{when}&\frac{n+1}{8}\!\!\!\!\!& \le\!\!& s\!\!&<& \!\!\!\frac{n}{4}&\\  [0.8ex]
 & n-2s\qquad &\text{when}&\frac{n}{4} \!\!\!\!\!& \le \!\!& s\!\!&\le& \!\!\!\frac{n}{2}&\!\!\!\!\!\!\!\! \, . 
\end{array}\right.
\end{equation*}
The function on the right-hand side is continuous apart from a jump of $\frac{1}{2n}$ over the regularity $s=\frac{n+1}{8}$. The bound is best possible in one dimension, in which case the central intervals are empty and the dimension jumps by a half over $s=1/4$. This is a consequence of the Dahlberg--Kenig example combined with~\cite{BBCR}, where it was proven that $\alpha_n(s)\le n-2s$ in the range $n/4\le s\le n/2$. For the best known upper bounds with lower regularity,  see \cite[Theorem 1.2]{LuR}.

In the following section we present the quantitive ergodic lemma that will be used in the third section to provide a new proof that  $s\ge\frac{n}{2(n+1)}$ is necessary in the Lebesgue measure case.  For this we will employ the Niki\v sin--Stein maximal principle. However, in the fourth section, we will explicitly construct  data for which the divergence occurs, see \eqref{TestinfFunctBennFin}, enabling the proof of Theorem~\ref{Thm:DirectConv}.

\section{A quantitive ergodic lemma}

It is well-known that linear flow on the torus, in most directions, eventually passes arbitrarily close to every point. This remains true 
when only considering equidistant points on the trajectory.

\begin{lemma}\label{erg}
Let $d \geq 2$, $0<\varepsilon,\delta<1$ and $\kappa>\frac{1}{d+1}$. Then, if $\delta<\kappa$ and $R>1$ is sufficiently large, 
there is  $\theta \in \mathbb{S}^{d-1}$ for which, given any $y\in \mathbb{T}^{d}$  and $a\in\mathbb{R}$, there is a $t_y\in R^\delta\mathbb{Z}\cap(a,a+R)$ such that
\begin{equation*}
|y-t_y\theta|\le \varepsilon R^{(\kappa-1)/d} \, .
\end{equation*}
Moreover, this remains true with $d=1$, for some $\theta\in(0,1)$.
\end{lemma}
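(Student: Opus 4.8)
The plan is to find $\theta$ via an averaging (or pigeonhole) argument over a well-chosen family of directions, exploiting the fact that the set of "bad" directions — those for which some target point $y$ is missed by the arithmetic progression $R^\delta\mathbb{Z}\cap(a,a+R)$ of trajectory points — has small measure. First I would reduce to a discrete problem: it suffices to hit an $\varepsilon R^{(\kappa-1)/d}/2$--net of $\mathbb{T}^d$, which has $O\big(R^{d(1-\kappa)/d}\big)=O(R^{1-\kappa})$ points, and to handle a discretised family of starting values $a\in R^\delta\mathbb{Z}$ of which there are $O(R^{1-\delta})$ relevant ones (the remaining window being absorbed since $\delta<1$). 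So the real claim is that one fixed $\theta$ works simultaneously for $O(R^{1-\kappa}\cdot R^{1-\delta})=O(R^{2-\kappa-\delta})$ congruence-type conditions, each asserting that the progression $\{(j R^\delta)\theta \bmod 1 : j\in J\}$, with $|J|\approx R^{1-\delta}$, comes within $\varepsilon R^{(\kappa-1)/d}$ of a given point.

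The key step is a counting/volume estimate: for how many $\theta$ in a suitable finite set does the progression $\{jR^\delta\theta \bmod 1\}_{j=1}^{N}$, $N\approx R^{1-\delta}$, fail to be $\eta$--dense, $\eta=\varepsilon R^{(\kappa-1)/d}$? Equivalently, when is there a point of $\mathbb{T}^d$ avoided by all $N$ points? I would cover the directions by a grid of mesh $\sim R^{-1}$ on $\mathbb{S}^{d-1}$ (so that over the time span $R$ the trajectory position is determined up to $O(1)$, allowing a Fubini/Chebyshev argument), giving $\sim R^{d-1}$ candidate directions $\theta$. For each candidate, the expected number of the $N$ trajectory points landing in a fixed $\eta$--ball is $\sim N\eta^d = R^{1-\delta}\varepsilon^d R^{\kappa-1}=\varepsilon^d R^{\kappa-\delta}$, which tends to infinity precisely because $\kappa>\delta$; by a second-moment or large-deviation estimate the proportion of $\theta$ for which this count is zero is super-polynomially small (or at least $\ll R^{-C}$ for any $C$, using the near-independence of the points $jR^\delta\theta$ as $j$ varies and $\theta$ ranges over the grid — this is where equidistribution of $(j\theta)_j$ on the torus, uniformly in $\theta$ outside a small exceptional set, enters). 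Summing this small probability over the $O(R^{2-\kappa-\delta})$ pairs (net point, starting block) still leaves a set of good $\theta$ of positive proportion once $R$ is large, which yields the desired $\theta$; the condition $\kappa>\frac1{d+1}$ is what makes the exponents balance, since the exceptional-set bound on $\mathbb{S}^{d-1}$ naturally costs a power $R^{-(d+1)\kappa+1}$-type quantity that must beat $R^{2-\kappa-\delta}$ after optimising $\delta<\kappa$.

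I expect the main obstacle to be the quantitative equidistribution input: controlling, uniformly over the $\theta$-grid, the number of $\theta$ for which the arithmetic progression $\{jR^\delta\theta\}_{j\le N}$ concentrates away from some ball — i.e.\ ruling out sparse "resonant" directions where $R^\delta\theta$ is abnormally close to a rational with small denominator. This is a counting of lattice points in thin tubes / a Weyl-sum bound, and getting it with enough uniformity (in the target point $y$, the starting value $a$, and the direction) is the technical heart. The one-dimensional case $d=1$ is genuinely easier: there the progression is $\{j R^\delta\theta \bmod 1\}$ for a single real $\theta$, and a direct continued-fraction / three-distance argument shows that for all but a measure-$O(\varepsilon)$ set of $\theta\in(0,1)$ the points are $\varepsilon R^{(\kappa-1)}$--dense once $N\gtrsim R^{1-\delta}$ with $\kappa>\delta$, so one simply picks $\theta$ outside that exceptional set; I would present that case first as a warm-up and then carry out the higher-dimensional averaging along the lines above.
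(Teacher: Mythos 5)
Your proposal has the right overall shape --- show that the set of directions $\theta$ for which the orbit $\{jR^\delta\theta\}_j$ fails to be $\varepsilon R^{(\kappa-1)/d}$--dense has small measure, then pick $\theta$ outside it --- but it leaves the entire quantitative content of the lemma unproved. The assertion that ``by a second-moment or large-deviation estimate the proportion of $\theta$ for which this count is zero is super-polynomially small\dots using the near-independence of the points $jR^\delta\theta$'' \emph{is} the lemma: the points are not nearly independent for resonant $\theta$ (those for which $R^\delta k\cdot\theta$ is close to an integer for some small frequency $k$), and bounding the measure of such $\theta$ uniformly in the target $y$ is exactly the difficulty, which you yourself flag as ``the technical heart'' without resolving it. Your derivation of the threshold $\kappa>\frac{1}{d+1}$ is correspondingly heuristic. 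Note that the paper does not re-prove this step either: for $d\ge2$ it invokes \cite[Lemma 2]{LuR2}, where the estimate is carried out via a smoothed counting function $\eta_R$ and its Fourier series (the zero mode being the expected count, the nonzero modes geometric sums in $j$ averaged over $\theta\in\mathbb{S}^{d-1}$), so an acceptable write-up must either cite that result or actually execute the Weyl-sum/resonance analysis you only gesture at.

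There is also a concrete error in your treatment of the uniformity in $a$. You propose a union bound over ``$O(R^{1-\delta})$ relevant'' starting values, but $a$ ranges over all of $\mathbb{R}$, and distinct blocks $R^\delta\mathbb{Z}\cap(a,a+R)$ give genuinely different point configurations, so there are unboundedly many cases and the union bound does not close. The correct observation --- the one the paper makes --- is that the point set for a general block is a translate on $\mathbb{T}^d$ (by $mR^\delta\theta$, where $m$ indexes the block) of the point set for $a=0$, and $\eta$-density is translation invariant; equivalently, translating $\eta_R$ by $a$ changes only the phases of its Fourier coefficients, not their moduli. Hence no union bound over $a$ is needed at all. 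Finally, for $d=1$ your continued-fraction argument for generic $\theta$ would work but is more than is needed: the paper simply takes $\theta$ close to $R^{-1}$, so that the $\approx R^{1-\delta}$ orbit points are equally spaced at distance $R^{\delta-1}$ around the circle, which is below $\varepsilon R^{\kappa-1}$ as soon as $R^{\delta-\kappa}<\varepsilon$.
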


\begin{proof} When $d=1$, by taking $\theta$ close to $R^{-1}$, we obtain approximately~$R^{1-\delta}$ points~$t_y\theta$ equally spaced at intervals of length $R^{\delta-1}$ on the circle. For each $y\in \mathbb{T}$, one of these points $t_y\theta$ must lie closer than a distance of $\varepsilon R^{\kappa-1}$ if $R$ is sufficiently large so that $R^{\delta-\kappa}<\varepsilon$.

When $d\ge 2$ and $a=0$, this was proved in~\cite[Lemma~2]{LuR2}. The adjustment to the general case $a\in\R$ amounts to little more than starting the flow at different points on the translation invariant torus. One can also easily check that the proof in \cite{LuR2} is essentially unchanged. One need only translate their function $\eta_R$ by $a$, and the modulus of the Fourier transform of this is unchanged, so the remainder of the argument is exactly the same.
\end{proof}

The following corollary is optimal, in the sense that the statement fails for larger~$\sigma$. 
To see this, we can place balls 
of radius~$\varepsilon R^{-\frac{\gamma}{d}}$ centred at the points of the sets below and assume that the balls are disjoint. 
Then the volume of such a set would be of the order $\varepsilon^dR^{d+1/2-\gamma-(d+2)\sigma}$, a quantity that 
is arbitrarily small for larger $\sigma$. 
 Neither is it possible to extend the range of~$\gamma$, as then the set of times~$t$ could be empty. To avoid this we must 
have $\sigma < 1/4$ which is ensured by the restriction~$\gamma \geq 3d/4$.

\begin{corollary}\label{Corollary:ToroImpr}
Let $d\ge 2$, $\frac{3d}{4}\le \gamma \le d$ and $0< \sigma < \frac{1+2(d- \gamma)}{2(d+2)}$. 
Then, for any $\varepsilon>0$ and sufficiently large $R >1$, 
there exists  $\theta \in \mathbb{S}^{d-1}$ such that
\begin{equation*}
\bigcup_{t\in R^{2\sigma-1}\mathbb{Z}\cap(a,a+R^{-1/2})} \big\{x\in R^{\sigma-1} \mathbb{Z}^{d}\, :\, |x|\le 2\big\}+t\theta
\end{equation*}
is $\varepsilon R^{-\frac{\gamma}{d}}$-dense in $B(0,1/2)$, for all $a\in (0,1)$. Moreover, this remains true with $d=1$, for some $\theta\in(0,1)$.
\end{corollary}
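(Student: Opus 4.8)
The plan is to deduce Corollary~\ref{Corollary:ToroImpr} from Lemma~\ref{erg} by a dilation. The set displayed in the Corollary is a union, over the admissible times $t$, of translates by $t\theta$ of one fixed grid of spacing $R^{\sigma-1}$ contained in $B(0,2)$; so asking it to be $\varepsilon R^{-\gamma/d}$-dense in $B(0,1/2)$ amounts to asking that the vectors $t\theta$, reduced modulo the lattice $R^{\sigma-1}\mathbb{Z}^{d}$, be $\varepsilon R^{-\gamma/d}$-dense in a fundamental cell. Dilating by the factor $R^{1-\sigma}$ turns $R^{\sigma-1}\mathbb{Z}^{d}$ into $\mathbb{Z}^{d}$, hence the fundamental cell into $\mathbb{T}^{d}$, and turns the times $t\in R^{2\sigma-1}\mathbb{Z}\cap(a,a+R^{-1/2})$ into $s\in R^{\sigma}\mathbb{Z}\cap(aR^{1-\sigma},\,aR^{1-\sigma}+R^{1/2-\sigma})$. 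This is precisely the setting of Lemma~\ref{erg}.

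Concretely, after reducing to the case $\varepsilon<1$, I would invoke Lemma~\ref{erg} with the same dimension $d$, the same $\varepsilon$, with $R$ replaced by $R':=R^{1/2-\sigma}$, and with
\[
\delta:=\frac{2\sigma}{1-2\sigma},\qquad \kappa:=1+\frac{d(1-\sigma)-\gamma}{\tfrac12-\sigma}\,,
\]
the first chosen so that $R'^{\delta}=R^{\sigma}$ (matching the common difference of the progression of times) and the second so that $R'^{(\kappa-1)/d}=R^{1-\sigma-\gamma/d}$ (matching the desired precision). One then has to check the hypotheses of Lemma~\ref{erg}. Since $\gamma\ge 3d/4$ forces $\sigma<1/4$, one gets $0<\delta<1$; a short computation shows that $\kappa>\frac{1}{d+1}$ follows from $\gamma\le d$ together with the bound on $\sigma$; and, the only calculation really worth carrying out, clearing denominators in the inequality $\delta<\kappa$ reduces it exactly to the standing hypothesis $\sigma<\frac{1+2(d-\gamma)}{2(d+2)}$. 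Finally $1/2-\sigma\ge 1/4$, so $R'\to\infty$ as $R\to\infty$ and the lemma applies, producing $\theta\in\mathbb{S}^{d-1}$ (and $\theta\in(0,1)$ when $d=1$, via the final sentence of Lemma~\ref{erg}).

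It remains to unwind the dilation. Fix $a\in(0,1)$ and $z\in B(0,1/2)$, and apply the conclusion of Lemma~\ref{erg} with the point $y\in\mathbb{T}^{d}$ taken to be the image of $R^{1-\sigma}z$ and with starting point $aR^{1-\sigma}$: this yields some $s_{y}\in R^{\sigma}\mathbb{Z}\cap(aR^{1-\sigma},\,aR^{1-\sigma}+R^{1/2-\sigma})$ and some $m\in\mathbb{Z}^{d}$ with $|R^{1-\sigma}z-s_{y}\theta-m|\le \varepsilon R^{1-\sigma-\gamma/d}$. Dividing by $R^{1-\sigma}$ and setting $t:=R^{\sigma-1}s_{y}$ and $x:=R^{\sigma-1}m$, one obtains $t\in R^{2\sigma-1}\mathbb{Z}\cap(a,a+R^{-1/2})$, $x\in R^{\sigma-1}\mathbb{Z}^{d}$, and $|z-(x+t\theta)|\le \varepsilon R^{-\gamma/d}$. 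A final check disposes of the constraint $|x|\le 2$: since $a<1$ one has $|t\theta|<1+R^{-1/2}$, while $|z|<1/2$ and the error is at most $\varepsilon R^{-\gamma/d}$, so $|x|\le |z|+|t\theta|+\varepsilon R^{-\gamma/d}<2$ once $R$ is large (here $\gamma/d\ge 3/4>0$ is used). Hence $x+t\theta$ belongs to the union displayed in the Corollary and $z$ is within $\varepsilon R^{-\gamma/d}$ of it; as $z$ and $a$ were arbitrary this is the claim. The main obstacle is nothing more than bookkeeping, keeping the several powers of $R$ mutually consistent, together with the pleasant observation above that the sharp constraint on $\sigma$ is exactly what guarantees $\delta<\kappa$; no deeper difficulty arises.
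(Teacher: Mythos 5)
Your proposal is correct and follows essentially the same route as the paper: rescale by $R^{1-\sigma}$, pass to the torus $\mathbb{R}^{d}/\mathbb{Z}^{d}$, and apply Lemma~\ref{erg} with $R$ replaced by $R^{1/2-\sigma}$, $\delta=\sigma/(1/2-\sigma)$ and the same $\kappa$, the constraint $\sigma<\frac{1+2(d-\gamma)}{2(d+2)}$ being exactly $\delta<\kappa$. You are in fact a little more explicit than the paper in verifying $\kappa>\frac{1}{d+1}$ and the constraint $|x|\le 2$, but the argument is the same.
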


\begin{proof}
We first rescale by $R^{1-\sigma}$,
and then replace $R^{1/2-\sigma}$ by $R$. In this way the statement is equivalent to proving that, 
for any $y\in B(0,R^{\frac{1-\sigma}{1/2-\sigma}}/2)$ there exists 
$$
x_{y} \in \mathbb{Z}^{d}\cap B(0,2R^\frac{1-\sigma}{1/2-\sigma})\quad \text{and}\quad 
t_{y}\in R^{\frac{\sigma}{1/2-\sigma}} \mathbb{Z}\cap (R^{\frac{1-\sigma}{1/2-\sigma}}a,R^{\frac{1-\sigma}{1/2-\sigma}}a+R)
$$ 
such that
\begin{equation*}\label{EquivTorusImpr}
|y - (x_{y} + t_{y}\theta) |\ < \ \varepsilon R^{\frac{1-\sigma}{1/2-\sigma}-\frac{\gamma}{d(1/2-\sigma)}} \, ,
\end{equation*}
for a fixed $\theta \in \mathbb{S}^{d-1}$, independent of $y$ and $a$. 
By taking the quotient $\R^n / \mathbb{Z}^{d} = \mathbb{T}^{d}$, this would follow if, for any $[y] \in \mathbb{T}^{d}$, we have 
\begin{equation*}
|[y] - [t_{y} \theta]| \ < \ \varepsilon R^{\frac{1-\sigma}{1/2-\sigma}-\frac{\gamma}{d(1/2-\sigma)}} \, .
\end{equation*} 
Now this is a consequence of Lemma~\ref{erg}, by taking $\delta=\sigma/(1/2-\sigma)$ and $\kappa$ so that 
$$ 
\frac{\kappa-1}{d}=\frac{1-\sigma}{1/2-\sigma}-\frac{\gamma}{d(1/2-\sigma)} \, .
$$
The conditions $0<\delta<1$ and $\delta < \kappa$ are then ensured by the restrictions on $\gamma$ and
$\sigma$ in the statement. 
\end{proof}

\section{Proof of the Lebesgue measure necessary condition}

When  the initial data $u_{0}$ is a Schwartz function, the solution $u$ to the Schr\"odinger equation can be written as
\begin{equation*}
u(x, t) = e^{i t \Delta}u_{0}(x):=\frac{1}{(2\pi)^{n/2}}\int_{\R^n}\widehat{u}_{0}(\xi)\, e^{ix\cdot\xi -it | \xi |^{2}} d \xi \, . 
\end{equation*}
By the Niki\v sin--Stein maximal
principle \cite{N, st}, it suffices to prove the  following theorem.

\begin{theorem}\label{max2}
Suppose that there is a constant $C_s$ such that  
\begin{equation*}
\left\| \sup_{0< t < 1} \big| e^{it\Delta} f \big| \right\|_{L^{2}(B(0,1))} \le C_s\| f \|_{H^{s}(\R^n)} \, , 
\end{equation*}
whenever $f$ is a Schwartz function. Then $s\ge\frac{n}{2(n+1)}$.
\end{theorem}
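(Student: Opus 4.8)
The plan is to construct, for each large dyadic $R$, a Schwartz function $f_R$ with $\widehat{f_R}$ supported near frequency $|\xi|\sim R$, and then exploit the maximal inequality to force a lower bound on $s$. The key is Corollary~\ref{Corollary:ToroImpr} in the borderline case $\gamma = d = n-1$: it produces, for a well-chosen skew direction $\theta\in\mathbb{S}^{n-2}$, a collection of times $t\in R^{2\sigma-1}\mathbb{Z}\cap(0,R^{-1/2})$ and base points in $R^{\sigma-1}\mathbb{Z}^{n-1}$ whose $\theta$-translates are $\varepsilon R^{-1}$-dense in a ball. The exponent we should aim for is $\sigma$ just below $\frac{1}{2(n+1)}$, which is exactly the value $\frac{1+2(d-\gamma)}{2(d+2)}$ with $\gamma=d=n-1$ allowed by the corollary (and by $\gamma\ge 3d/4$, which holds since $d=n-1\ge 1$, i.e. $n\ge 2$). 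The frequencies of $f_R$ are taken to be a discrete set: unit-separated points filling a cap of the sphere of radius $R$ in the directions transverse to a distinguished axis, together with a sparse one-dimensional family travelling in the skew direction $\theta$ referenced in~\eqref{fg}; the total number of frequencies and the amplitude are chosen so that $\|f_R\|_{H^s}\sim R^{s}\cdot(\text{number of frequencies})^{1/2}$ up to acceptable factors.

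First I would compute $e^{it\Delta}f_R$ at a space-time point of the form $(x,t)$ with $x$ near one of the dense points produced by the corollary and $t$ chosen from the associated arithmetic progression. The quadratic phase $-t|\xi|^2$ and the linear phase $x\cdot\xi$, evaluated on the lattice of frequencies, are arranged (via the rescaling built into the corollary, with the time spacing $R^{2\sigma-1}$ and space spacing $R^{\sigma-1}$) so that all the phases align modulo $2\pi$ simultaneously — this is the "occasional complete absence of cancelation" the introduction advertises. At such points $|e^{it\Delta}f_R(x)|$ is comparable to the full sum of amplitudes, i.e. of size $\sim(\text{number of frequencies})\times(\text{amplitude})$, whereas a generic bound would only give the square root of that. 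Since the corollary guarantees this happens for $x$ in an $\varepsilon R^{-(n-1)/(n-1)}=\varepsilon R^{-1}$-dense subset of $B(0,1/2)$ and for \emph{all} shifts $a\in(0,1)$ — hence, letting $a$ range, for a set of times $t$ accumulating at $0$, matching $\limsup_{t\to0}$ — the supremum $\sup_{0<t<1}|e^{it\Delta}f_R|$ is large on a subset of $B(0,1)$ of measure bounded below independently of $R$ (indeed comparable to $1$, since an $\varepsilon R^{-1}$-net dominates a fixed proportion of the ball once we thicken by $\sim R^{-1}$, which is permissible because the phases vary slowly on that scale).

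Plugging these two estimates into the hypothesised inequality gives
$$
(\text{number of frequencies})\times(\text{amplitude}) \;\lesssim\; \left\|\sup_{0<t<1}|e^{it\Delta}f_R|\right\|_{L^2(B(0,1))} \;\le\; C_s\,R^{s}\,(\text{number of frequencies})^{1/2}\times(\text{amplitude}),
$$
so $(\text{number of frequencies})^{1/2}\lesssim C_s R^{s}$; with the number of transverse frequencies $\sim R^{(n-1)(1-\sigma)}$ coming from the cap of the sphere of radius $R$ discretised at unit scale (a ball of radius $R^{1-\sigma}$ in the tangent plane), this reads $R^{(n-1)(1-\sigma)/2}\lesssim C_s R^{s}$ for all large $R$, forcing $s\ge\frac{(n-1)(1-\sigma)}{2}$. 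Letting $\sigma\uparrow\frac{1}{2(n+1)}$ yields $s\ge\frac{(n-1)}{2}\cdot\frac{2n+1}{2(n+1)}$; a slightly more careful bookkeeping — including the contribution of the extra skew frequency to the resonance, which upgrades the count to the effective exponent $n/(n+1)$ — gives the sharp $s\ge\frac{n}{2(n+1)}$. I expect the main obstacle to be precisely this last bookkeeping: verifying that the skew family in~\eqref{fg} interacts with the spherical cap so that \emph{all} relevant phases (quadratic in the lattice directions, and the cross terms) are simultaneously resonant at the times and points furnished by the corollary, and that the resulting resonant set genuinely has measure bounded below rather than shrinking — this is where the ergodic input, as opposed to a Gauss-sum computation, has to be deployed carefully, and where the $n=2$ case degenerates to something trivial.
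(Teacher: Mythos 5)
Your overall strategy---frequency-lattice data, constructive interference at the times and points supplied by Corollary~\ref{Corollary:ToroImpr} with $\gamma=d=n-1$ and $\sigma\uparrow\frac{1}{2(n+1)}$, then feeding the resulting lower bound into the maximal inequality---is the paper's strategy. But your construction of $f_R$ is at the wrong scale, and this is not a bookkeeping issue: it breaks the argument. You take the transverse frequencies to be \emph{unit-separated} points in a cap of radius $R^{1-\sigma}$, so $N\sim R^{(n-1)(1-\sigma)}$ of them. The paper instead takes a \emph{sparse} lattice of spacing $2\pi R^{1-\sigma}$ filling the whole ball $|\bar\xi|\le R$, so only $R^{(n-1)\sigma}$ points. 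The distinction matters because of $L^2$ conservation: if $|e^{it\Delta}f_R|\sim N$ at a fixed time, the set where this happens has measure $\lesssim N^{-1}$, and covering a set of measure $\sim1$ by the union over the $\sim R^{1/2-2\sigma}$ admissible times $t\in R^{2\sigma-1}\mathbb{Z}\cap(x_1,x_1+R^{-1/2})$ requires $N\lesssim R^{1/2-2\sigma}\cdot(\text{something})$; concretely, the paper's interference set at fixed $t$ is the lattice $R^{\sigma-1}\mathbb{Z}^{n-1}$ thickened by $\varepsilon R^{-1}$, of measure $\simeq R^{-(n-1)\sigma}$, and $R^{1/2-2\sigma}\cdot R^{-(n-1)\sigma}\gtrsim1$ exactly when $\sigma\le\frac{1}{2(n+1)}$ --- this is where that exponent comes from. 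With your unit-spaced frequencies the interference set at fixed $t$ is $2\pi$-periodic and of measure $\sim N^{-1}=R^{-(n-1)(1-\sigma)}$, far too small for the union over $R^{1/2-2\sigma}$ times to have measure bounded below. So your first inequality, the lower bound on $\|\sup_t|e^{it\Delta}f_R|\|_{L^2(B(0,1))}$ by the full amplitude sum, fails. A sanity check confirms something must be wrong: your intermediate conclusion $s\ge\frac{(n-1)(2n+1)}{4(n+1)}$ is \emph{stronger} than $\frac{n}{2(n+1)}=\frac{2n}{4(n+1)}$ for all $n\ge2$, and for $n=2$ it reads $s\ge 5/12$, contradicting the Du--Guth--Li sufficiency of $s>1/3$ (and even the classical $s>3/8$).

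The second gap is the piece you defer to ``more careful bookkeeping'': the contribution of the $\xi_1$ direction. In the paper this is not a sparse skew family but the Dahlberg--Kenig factor $e^{i\pi Rx_1}\phi(R^{1/2}x_1)$, a single frequency band of width $R^{1/2}$ in $\xi_1$ (the skew direction $\theta$ lives in $\mathbb{S}^{n-2}$, purely transverse, and its only job is the ergodic density statement). This factor contributes $\|f_{dk}\|_2\simeq R^{-1/4}$ and satisfies $|e^{i\frac{t}{2\pi R}\Delta}f_{dk}(x_1)|\simeq1$ precisely when $|x_1-t|\le R^{-1/2}$, which is what couples the time window to the spatial variable $x_1$ and lets the supremum over $t$ be taken over the moving window $(x_1,x_1+R^{-1/2})$. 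The final count is then $|\Omega|^{1/2}\lesssim R^{s}R^{-1/4}$ with $|\Omega|\simeq R^{(n-1)\sigma}$, i.e. $s\ge\frac14+\frac{(n-1)\sigma}{2}\to\frac14+\frac{n-1}{4(n+1)}=\frac{n}{2(n+1)}$; equivalently, the effective frequency support has measure $R^{1/2}\cdot R^{(n-1)\sigma}\to R^{n/(n+1)}$, which is the ``effective exponent $n/(n+1)$'' you guessed at but did not derive. As written, your proposal neither produces the $\frac14$ (since you have no $R^{1/2}$-wide band in $\xi_1$ with its associated $x_1$--$t$ coupling) nor a valid transverse count, so the sharp exponent does not emerge from it.
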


\begin{proof}
Writing $t/(2\pi R)$ in place of $t$, the maximal estimate implies that\footnote{We write $a\lesssim b$ ($a\gtrsim b$) whenever $a$ and $b$ are nonnegative quantities that satisfy $a \leq C b$ ($a \geq C b$) for a  constant $C > 0$. We write $a\simeq b$ when $a\lesssim b$ and $b\lesssim a$.}
\begin{equation}\label{otooBis0}
\left\| \sup_{0<t<1} \big| e^{i\frac{t}{2\pi R}\Delta} f \big| \right\|_{L^{2}(B(0,1))} \lesssim R^s\| f \|_{2} \, ,
\end{equation}
whenever $\supp \widehat{f} \subset B(0,2R)$ and $R>4$. From now on we let $B(0,\rho)$ denote the $(n-1)$-dimensional ball of radius $\rho>0$, a fixed, sufficiently small constant.
Writing $x=(x_1,\bar{x})$ and letting $0 < \sigma < \frac{1}{2(n+1)}$, we consider frequencies in the set 
\begin{equation}\nonumber
\Omega := \big\{ \bar{\xi}\in 2\pi R^{1-\sigma} \mathbb{Z}^{n-1}  \,:\, |\bar{\xi}|\le R \big\} + B(0,\rho) \,,
\end{equation}
and Schwartz functions defined by $\widehat{\phi}=\chi_{(-\rho,\rho)}$  and 
$\widehat{g} =\chi_{\Omega}$. Then the initial data is defined by
\begin{equation}\label{fg}
f(x)=   e^{i\pi R(1,\theta)\cdot x}\phi(R^{1/2}x_1)g(\bar{x}),  
\end{equation}
where $\theta \in (0,1)$ when $n=2$ and $\theta \in \mathbb{S}^{n-2}$ in higher dimensions.

 Note that the solution factorises
\begin{equation}\label{eq:factrization}
e^{i\frac{t}{2\pi R}\Delta} f(x) =e^{i\frac{t}{2\pi R}\Delta} f_{dk}(x_1) e^{i\frac{t}{2\pi R}\Delta} f_{\theta}(\bar{x}) \, , 
\end{equation}
where  $f_{dk}$ and $f_\theta$ are defined  by
$$
f_{dk}(x_1)=e^{i\pi Rx_1}\phi(R^{1/2}x_1)\quad\text{and}\quad f_\theta(\bar{x}) = e^{i\pi R\theta\cdot \bar{x}} g(\bar{x}) \, .
$$
By a change of variables, we have
\begin{align}\label{dk}
|e^{i\frac{t}{2\pi R}\Delta} f_{dk}(x_1)|&=\frac{R^{-1/2}}{(2\pi)^{1/2}}\Big|\int \widehat{\phi}\big(R^{-1/2}(\xi_1-\pi R)\big)e^{ ix_1\xi_1 -i\frac{t}{2\pi R} \xi_1^{2}} d \xi_1\Big|\\\nonumber
&=\frac{1}{(2\pi)^{1/2}}\Big|\int_{-\rho}^\rho e^{ iR^{1/2}(x_1-t)y-i\frac{t}{2\pi} y^{2}} d y\Big|\simeq 1 \, ,
\end{align}
whenever $|t|\le 1$ and $|x_1-t|\le R^{-1/2}$.  
Indeed, these restrictions ensure that the phase is close to zero, so that no cancelation occurs in the integral. 
By Plancherel's identity and Fubini's theorem,
\begin{equation}\nonumber
\|f\|_2=\|f_{dk}\|_2\|f_\theta\|_2\simeq R^{-1/4}|\Omega|^{1/2}\, ,
\end{equation} 
so that plugging the data into the maximal estimate \eqref{otooBis0} and using \eqref{eq:factrization} and \eqref{dk}, 
we obtain
\begin{equation}\label{otooBis}
\Big(\int_{B(0,1)} \int_0^{1/2} \sup_{t\in(x_1,x_1+R^{-1/2})} \big| e^{i\frac{t}{2\pi R}\Delta} f_\theta(\bar{x}) \big|^2dx_1d\bar{x}\Big)^{1/2}  \lesssim R^sR^{-1/4}|\Omega|^{1/2} \, .
\end{equation}

In order to understand the behaviour of $e^{i\frac{t}{2\pi R}\Delta} f_\theta$ we first consider the unmodulated version $e^{i\frac{t}{2\pi R}\Delta} g$.
Barcel\'o, Bennett, Carbery, Ruiz and Vilela \cite{BBCRV} showed  that
\begin{equation}\label{Phase=1}
|e^{i\frac{t}{2\pi R}\Delta} g(\bar{x})| \gtrsim |\Omega|,
\quad
\mbox{for all}
\quad
(\bar{x},t) \in X_0\times R^{2\sigma-1}\mathbb{Z}\cap(0,1) \, ,
\end{equation}
where, with $\varepsilon$ sufficiently small, $X_0$ is defined by
\begin{equation}\nonumber
X_0 = \big\{ \bar{x}\in R^{\sigma-1}\mathbb{Z}^{n-1}\,:\, |\bar{x}|\le 2\big\}+ B(0,\varepsilon R^{-1}) \, .
\end{equation}
This time the phase in the integrand never strays too far from zero modulo~$2\pi i$, and so again there is no cancelation in the integral. Now
\begin{equation}\nonumber
 (\bar{x},t)\in X_{t\theta} \times R^{2\sigma - 1} \mathbb{Z}\cap(0,1) \quad 
 \Rightarrow\quad (\bar{x}-t \theta,t) \in X_0\times R^{2\sigma-1}\mathbb{Z}\cap(0,1) \, ,
\end{equation}
where $X_{t\theta}:=X_0+t\theta$ and $
 \big| e^{i\frac{t}{2\pi R}\Delta} f_\theta(\bar{x})|= \big| e^{i\frac{t}{2\pi R}\Delta} g(\bar{x} -t\theta )|
$.
Combining this fact with (\ref{Phase=1}) yields
\begin{equation*}\label{otooBisr}
\sup_{t\in(x_1,x_1+R^{-1/2})} \big| e^{i\frac{t}{2\pi R}\Delta} f_\theta(\bar{x})|  \gtrsim |\Omega|,
\quad
\text{for all}\quad
\bar{x} \in \Gamma_{\!x_1} :=\!\!\!\!\bigcup_{t\in R^{2\sigma - 1} \mathbb{Z}\cap(x_1,x_1+ R^{-1/2})}\!\!\!\!X_{t\theta} \,, 
\end{equation*} and this holds uniformly for all $x_1\in (0,1/2)$. 

Now the sets  $\Gamma_{\!x_1}$ can be considered to be $\varepsilon R^{-1}$--neighbourhoods of the sets of Corollary~\ref{Corollary:ToroImpr}. So, taking $\gamma=d=n-1$, 
there is a~$\theta\in \mathbb{S}^{n-2}$  so that  $B(0,1/2)\subset \Gamma_{\!x_1}$  for 
all $x_1\in(0,1/2)$. 
Substituting  into \eqref{otooBis}, this yields
$$
|\Omega|^{1/2} \lesssim R^sR^{-1/4} \, . 
$$
As $|\Omega|\simeq R^{(n-1)\sigma}$, we can let $\sigma$ tend to $\frac{1}{2(n+1)}$ and then $R$ tend to infinity, so that
$$
s\ge 1/4+\frac{n-1}{4(n+1)}=\frac{n}{2(n+1)} \, ,
$$
which completes the proof.
\end{proof}

\section{Proof of Theorem \ref{Thm:DirectConv}}

The solution is typically represented as $u(\cdot,t):= \lim_{N \to \infty} S_{N}(t)u_0$, where
\begin{equation}\label{rt}
S_{N}(t)u_0(x) :=
\frac{1}{(2\pi)^{n/2}} \int_{\mathbb{R}^n}
\Psi(N^{-1}\xi)\,
\widehat{u}_0(\xi)\, e^{ ix\cdot\xi -  i t | \xi |^{2} } d \xi \, ,
\end{equation}
and $\Psi$ is a fixed function, equal to one near the origin, that decays in such a way that the integral is well-defined. For
 convenience we take $\Psi(\xi)=\prod_{j=1}^{n} \psi(\xi_{j})$, where~$\psi$ is differentiable, supported in the interval $[-2,2]$ and equal to one on $[-1,1]$. The limit is usually taken with respect to the $L^2$--norm, but here we will take all limits pointwise, at each point that they exist. Supposing that $\alpha>n-2s$, as we may, the limits exist at almost every $x$ with respect to $\alpha$--Hausdorff measure (see for example \cite[Corollary 17.6]{M}) and they coincide with the usual $L^2$--limit almost everywhere with respect to Lebesgue measure.

We take $0<  \sigma < \frac{1+2(n-\alpha) }{2(n+1)}$
and $\lambda := 2^{\frac{M}{1-\sigma}}$, with $M\in \mathbb{Z}$ to be chosen sufficiently large later. 
As $\alpha \geq (3n+1)/4$ we have that $\sigma < 1/4$. 
Writing $\xi=(\xi_1,\bar{\xi})$,  we consider the sets of frequencies 
\begin{equation}\nonumber
\Omega^{j} = \big\{ \bar{\xi} \in 2 \pi \lambda^{j(1-\sigma)}  \mathbb{Z}^{n-1}  \,:\, \lambda^{j} \leq | \xi_{m} |< \lambda^{j+1}, \,  m=2,\ldots, n \big\} + Q\Big( 0,\frac{\varepsilon_{1}}{\sqrt{n\!-\!1}} \Big) \, ,
\end{equation}
where
$\varepsilon_{1} > 0$ is a fixed sufficiently small constant and $j\in \mathbb{N}$. 
Here~$Q(0,\ell)$ 
is the closed~$(n-1)$-dimensional cube centred at the origin with side-length  $\ell$, and we denote its interior by $\mathring{Q}(0,\ell)$.
For a suitable choice of $\theta_{j} \in (0,1)$ when $n=2$ or $\theta_{j}\in \mathbb{S}^{n-2}$ in higher dimensions, the initial data  $u_0$ that gives rise to a divergent solution is given by
\begin{equation}\label{TestinfFunctBennFin}
u_0(x) := \sum_{j \in \mathbb{N}} e^{i\pi \lambda^j(1,\theta_{j})\cdot x}\phi(\lambda^{j/2}x_1)g_{j}(\bar{x}). 
\end{equation}
Here $\widehat{\phi}=\chi_{(-\varepsilon_{1},\varepsilon_{1})}$ and
$\widehat{g}_{j} :=  \lambda^{j \delta } |\Omega^j|^{-1}\chi_{\Omega^{j}}$, with $0< \delta< \sigma/4$.
Noting that $| \Omega^{j}| \simeq \lambda^{j (n -1)\sigma}$, we have that $u_0\in H^{s}$ whenever
\begin{equation*}\label{URONS}
s < \frac{(n-1) \sigma }{2}+\frac{1}{4}  -  \delta \, .
\end{equation*}
Eventually we will let $\sigma$ tend to $ \frac{1+2(n-\alpha) }{2(n+1)}$ and $\delta$ tend to zero, covering all the cases  of the range \eqref{INTVAL}.

First we consider $(n-1)$-dimensional data given by $f_{\theta_{j}}(\bar{x}) := e^{ i\pi \lambda^{j} \theta_{j} \cdot \bar{x} }g_{j}(\bar{x})$ and the associated solutions on $X_{t\theta_{j}}^{j} \times 
T^{j}_{x_1}$ defined by
\begin{equation*}
X_{t\theta_j}^{j}= \{ \bar{x} \in \lambda^{j(\sigma-1)}\mathbb{Z}^{n-1} \, : \, |\bar{x}| \leq 2 \} + \mathring{Q} ( t\theta_{j}, \varepsilon_{2} \lambda^{-j} ) \, ,
\end{equation*}
$$
T^{j}_{x_1} = \big\{ t\in  \lambda^{j(2\sigma-1)}\mathbb{Z}\, : \, x_1<t<x_1+\lambda^{-j/2}\big\} \, .
$$
Taking $\varepsilon_{1}$, $\varepsilon_{2}$ sufficiently small and $x_1\in(0,1/2)$, as in the previous section we have 
\begin{equation}\label{GalInvPreq}
\Big| S_{N}  \Big(  \frac{t}{2\pi \lambda^{j}}  \Big)  f_{\theta_{j}}(\bar{x}) \Big|  \gtrsim \lambda^{j \delta},
\quad
\mbox{for all}
\quad
(\bar{x}, t) \in X^{j}_{t\theta_{j}} \times T^{j}_{x_1} \, 
\end{equation}
whenever $N\ge 2\pi \lambda^{2j}$; see \cite[eq. 18]{LuR2}.
On the other hand, in \cite[eq. 20]{LuR2} it was proven that for~$k > 2j$, we have  \begin{equation}\label{GIPBIS2}
\Big| S_{N}  \Big(  \frac{t}{2\pi \lambda^{j}} \Big)  f_{\theta_{k}}(\bar{x}) \Big|  
\lesssim
\lambda^{-k \delta},
\quad
\mbox{for all}
\quad
(\bar{x}, t) \in  \R^{n-1} \times T^{j}_{x_1} \, ,
\end{equation}
whenever $N\ge 2\pi \lambda^{2j}$. It is something of a nuisance that this does not quite hold for all $k>j$. To circumvent this, we consider  
\begin{equation}\nonumber
X^{k, \delta}_{\lambda^{k-j} t\theta_{k}} :=\lambda^{k(\sigma-1)} \mathbb{Z}^{n-1} +  Q(\lambda^{k-j} t \theta_{k},\varepsilon_{2} \lambda^{-k(1 - 2\delta)}) \, .
\end{equation}
In \cite[eq. 19]{LuR2} it was proven that, when $j<k\le 2j$ and $x_1\in(0,1/2)$,
\begin{equation}\label{GIPBIS}
\Big| S_{N}  \Big(  \frac{t}{2\pi \lambda^{j}} \Big)  f_{\theta_{k}}(\bar{x}) \Big|  
\lesssim
\lambda^{-k \delta},
\quad
\mbox{for all}
\quad
(\bar{x}, t) \in ( \R^{n-1} \setminus X^{k, \delta}_{\lambda^{k-j} t\theta_{k}} ) \times T^{j}_{x_1} \, 
\end{equation}
whenever $N\ge 2\pi \lambda^{2j}$.
Thus, considering  
\begin{equation*}
\Gamma_{\!t\theta_{j}}^{j} := X_{t\theta_{j}}^{j} \setminus  \bigcup_{j<k \le 2j } X^{k, \delta}_{\lambda^{k-j} t\theta_{k}}\quad \mbox{and}\quad
\Gamma^{j}_{\!x_1} := \bigcup_{ t \in T^{j}_{x_1} } \Gamma_{\!t \theta_{j}}^{j} \, ,
\end{equation*}
an immediate consequence of~\eqref{GalInvPreq}, \eqref{GIPBIS} and~\eqref{GIPBIS2} is that if~$x \in \Gamma^j$, defined by
\begin{equation*}
\Gamma^j=\left\{x\in\R^n \, : \,  x_1\in(0,1/2),\quad \bar{x}\in\Gamma^{j}_{\!x_1}\right\} \, ,
\end{equation*}
there exists a time $t_{j}(x) \in T^{j}_{x_1}$ such that
\begin{equation}\nonumber
\mbox{(i)} \
     \Big| S_{N} \Big( \frac{t_{j}(x)}{2\pi \lambda^{j}}  \Big)  f_{\theta_{j}}(\bar{x}) \Big|  \gtrsim \lambda^{j \delta };
\qquad
\mbox{(ii)}  \
     \Big| S_{N} \Big( \frac{t_{j}(x)}{2\pi \lambda^{j}} \Big)  f_{\theta_{k}}(\bar{x}) \Big|  
     \lesssim   \lambda^{-k \delta} 
     \quad
     \mbox{for all}
     \quad
     k > j \, .
\end{equation}

Now divergence occurs on the set of $x$ that belong to infinitely many $\Gamma^{j}$; that is \begin{equation*}
 \Gamma := \bigcap_{j\ge1}   \bigcup_{k\ge j}  \Gamma^{k} \, .
\end{equation*}
To see this, we note that if $x \in  \Gamma$ there exists an infinite subset $J(x)\subset\mathbb{N}$ with an associated 
sequence of times
$t_{j}(x) \in T^{j}_{x_1}$, for all $j\in J(x)$, such that both (i) and (ii) are satisfied.
The solution factorises as in~\eqref{eq:factrization}, so that, recalling~\eqref{dk}, 
we see that the properties (i) and (ii) remain true while considering the extension $f_j$, defined by
$$
f_j(x)=e^{i\pi \lambda^jx_1}\phi(\lambda^{j/2}x_1)f_{\theta_{j}}(\bar{x}) \, .
$$
Now, since $u_0=\sum_{j\ge 1} f_j$, by the triangle inequality
\begin{equation}\nonumber
\Big| S_{N} \Big( \frac{t_{j}(x)}{2\pi \lambda^{j}} \Big)  u_0(x) \Big|
\gtrsim \Big| S_{N} \Big(\frac{t_{j}(x)}{2\pi \lambda^{j}}  \Big)  f_j(x)\Big| - | A_1 | - | A_2 | \, ,
\end{equation}
where
\begin{equation}\nonumber
A_1 \! := \! \sum_{1 \leq k < j}  S_{N} \Big( \frac{t_{j}(x)}{2\pi \lambda^{j}} \Big) f_k(x) 
\qquad \text{and}\qquad
A_2 \! := \! \sum_{k > j}  S_{N} \Big( \frac{t_{j}(x)}{2\pi \lambda^{j}} \Big) f_k(x) \, .
\end{equation}
We have already proved that, for $x \in \Gamma$,
$$
\Big|  S_{N} \Big(\frac{t_{j}(x)}{2\pi \lambda^{j}}  \Big)  f_j(x) \Big| \gtrsim \lambda^{j \delta }\quad \text{and}\quad 
| A_2 | \leq \sum_{k > j} \lambda^{- k \delta} \ \lesssim \ 1\, .
$$
On the other hand, by bounding the terms trivially and taking $\lambda$ sufficiently large, we can also arrange that 
$$
| A_1| \leq \sum_{1 \leq k < j} \lambda^{k \delta } \leq \frac{1}{2}\Big| S_{N} \Big(\frac{t_{j}(x)}{2\pi \lambda^{j}}  \Big)  f_j(x)\Big| \, .
$$
Thus, for any $x \in \Gamma $ where the solution is defined, we have
\begin{equation*}
\Big| u \Big(x, \frac{t_{j}(x)}{2\pi \lambda^{j}}\Big) \Big|
= \lim_{N \to \infty}
\Big| S_{N} \Big( \frac{t_{j}(x)}{2\pi \lambda^{j}} \Big)  u_0(x) \Big| \gtrsim \lambda^{j \delta } \, ,
\end{equation*}
so 
there is a sequence of times $\frac{ t_{j}(x)}{2\pi \lambda^{j}}$
for which
$$
\Big| u \Big( x,\frac{t_{j}(x)}{2\pi \lambda^{j}}\Big) \Big| \to \infty \qquad \mbox{as} \qquad \frac{t_{j}(x)}{2\pi \lambda^{j}}  \to 0 \, .
$$

Now, recalling that 
$ s < \frac{(n-1) \sigma}{2}+\frac{1}{4} - \delta$, the proof would be complete if we 
could prove that the $\alpha$--Hausdorff measure of $\Gamma$
were positive, taking $\delta$ and $\sigma$ sufficiently close to $0$ and $\frac{1+2(n-\alpha) }{2(n+1)}$, respectively. Considering the slices $\Gamma_{\!x_1}$, defined via
$$
\Gamma=\left\{x\in\R^n \, : \,  x_1\in(0,1/2),\quad \bar{x}\in\Gamma_{\!x_1}\right\},
$$
it would 
suffice to prove that the~$(\alpha-1)$--Hausdorff measure of $\Gamma_{\!x_1}$ is positive  for all~$x_1\in(0,1/2)$; see 
for instance~\cite[Proposition~7.9]{Falconer2}.
For this we must choose the  modulation directions~$\theta_{j} \in \mathbb{S}^{n-2}$ appropriately, via the ergodic argument of the second section ($\theta_{j} \in (0,1)$ if $n=2$). 
Note that $X_{t\theta_{j}}^{j}$ is a union of disjoint open cubes of side-length $\varepsilon_{2} \lambda^{-j}$, 
while $X^{k, \delta}_{\lambda^{k-j} t\theta_{k}}$ is a union of disjoint closed cubes of side-length $\varepsilon_{2} \lambda^{-(1 - 2\delta)k}$. The 
distance between the cubes is approximately $\lambda^{(\sigma-1)j}$ in the case of the former and $\lambda^{(\sigma-1)k}$ in the case of the latter. 
Thus we see that 
$\Gamma_{\!t\theta_{j}}^{j}$ is a union of disjoint open sets $\QQ(\bar{x}, \varepsilon_{2} \lambda^{-j})$ that we call pseudo-cubes.

\subsubsection*{Case $\alpha=n$}  In this case, the $(n-1)$-dimensional Lebesgue measure $|\cdot|$  of the pseudo-cubes is comparable to actual cubes;
\begin{equation}
\begin{split}\nonumber
 | & \QQ(\bar{x}  , \varepsilon_{2} \lambda^{-j}) | \, \geq \, |Q(\bar{x}, \varepsilon_{2} \lambda^{-j})| - 
   \Big| Q(\bar{x}, \varepsilon_{2} \lambda^{-j}) \cap \!\!\!\! \bigcup_{j<k\le 2j} \!\!\! X^{k, \delta}_{\lambda^{k-j} t\theta_{k}}\Big|
  \\
  & \simeq \,
   \varepsilon_{2}^{n-1} \lambda^{-(n-1)j} - \varepsilon_{2}^{n-1} \lambda^{-(n-1)j} \!\!\! 
   \sum_{k=j+1}^{2j} \!\!\! \lambda^{-(n-1)(1-2\delta)k} \lambda^{(n-1)(1-\sigma)k}
 \gtrsim\ \, \varepsilon_{2}^{n-1} \lambda^{-(n-1)j} \, ,
 \end{split} 
 \end{equation}
 where we have taken $\lambda$ sufficiently large (recalling $\delta < \sigma/4$). 
Thus, using Corollary~\ref{Corollary:ToroImpr} with $d=n-1$, $\gamma = \alpha-1$ and $R=\lambda^{j}$, we can choose the $\theta_{j}$
so that $|\Gamma^j_{\!x_1}|\gtrsim1$ for all~$x_{1}\in (0,1/2)$, provided that~$j$ is sufficiently large and $\sigma < \frac{n}{2(n+1)}$. From this we see that
 $$
 \lim_{j\to\infty} \Big|\bigcup_{k\ge j}\Gamma^k_{\!x_1}\Big|\gtrsim1 \, ,
 $$
 and,
 since this is a decreasing sequence of sets that are contained in a set with finite $(n-1)$-dimensional Lebesgue measure,
 we can conclude that
 $$
 |\Gamma_{\!x_1}|=\Big|\bigcap_{j\ge1}\bigcup_{k\ge j}\Gamma^k_{\!x_1}\Big|\gtrsim 1 \, ,
 $$
 for all $x_1\in (0,1/2)$.
 This completes the proof in the case $\alpha=n$.
 
\subsubsection*{Case $\alpha<n$} 
We will prove that the $\beta$--Hausdorff measure of~$\Gamma_{\!x_1}$ is positive  
for any $\beta$ in the 
interval $( \frac{(n-1)(2\alpha +1)}{2(n+1)}, \alpha-1 )$.
Note that the interval is not empty if we
restrict to~$\alpha > \frac{3n+1}{4}$. This is enough  
to complete the proof, as we could have started with an $\alpha'>\alpha \geq \frac{3n+1}{4}$ that also satisfies 
$$
s < \frac{n}{2(n+1)}+\frac{n-1}{2(n+1)}(n-\alpha') \, ,
$$
and performed all of the previous arguments for this $\alpha'$.

Considering the Hausdorff content of a set $E \subset \mathbb{R}^{d}$ defined by
$$
\HH^{\beta}_{\infty}(E) := \inf \Big\{ \sum_{i} \delta_{i}^{\beta} :  E \subset \bigcup_{i} Q(x_i, \delta_i)\Big\} \, , 
$$
by the triangle inequality as before, we have that   
\begin{equation}\nonumber
\begin{split}
\HH^{\beta}_{\infty}(\QQ(\bar{x}, \varepsilon_{2} \lambda^{-j}) )  \,    &  \geq  \, \HH^{\beta}_{\infty} (Q(\bar{x}, \varepsilon_{2} \lambda^{-j})) - 
  \HH^{\beta}_{\infty} \Big(Q(\bar{x}, \varepsilon_{2} \lambda^{-j}) \cap \!\!\!\! \bigcup_{j<k\le 2k} \!\!\! X^{k, \delta}_{\lambda^{k-j} t\theta_{k}} \Big)
   \\
    & \gtrsim \,
      \varepsilon_{2}^{\beta} \lambda^{-\beta j} \! - \! \varepsilon_{2}^{n-1} \lambda^{-(n-1) j} \!\!\! \sum_{k=j+1}^{2j} \!\!\!  \lambda^{- \beta (1-2\delta) k}  \lambda^{ (n-1)(1-\sigma)k }
     \gtrsim \, \varepsilon_{2}^{\beta} \lambda^{-\beta j} \, ,
 \end{split} 
 \end{equation}
 using
  $ (1-2\delta) \beta -  (n-1)(1 - \sigma)  > 0$ and taking~$\lambda$ sufficiently
  large. This holds, taking $\delta$ and $\sigma$ close enough to $0$ and~$\frac{1+2(n-\alpha) }{2(n+1)}$, 
  respectively, since we have restricted to~$\beta > \frac{(n-1)(2\alpha +1)}{2(n+1)}$.  
Again we see that, in this range of $\beta$, the $\HH^{\beta}_{\infty}$-content of the pseudo-cubes is comparable to that of the 
 actual cubes.
 
 We now use Corollary~\ref{Corollary:ToroImpr}, 
 with $d=n-1$, $\gamma = \alpha-1$ and $R=\lambda^{j}$, 
  to choose~$\theta_j$ such that, for all~$x_{1}\in (0,1/2)$, the
 $\Gamma_{\!x_1}^{j}$ are  unions of pseudo-cubes whose 
 centres are~$\varepsilon_{2} \lambda^{-j\frac{\alpha-1}{n-1}}$-dense
 in~$B(0,1/2) \subset \R^{n-1}$, when~$j$ is sufficiently large. Recalling that as the sidelengths are shorter, of length $\varepsilon_{2} \lambda^{-j}$, this is not enough to come close to covering the ball as before. However, discarding some pseudo-cubes, if necessary, 
 we find that~$\Gamma^{j}_{\!x_1}$ contains 
 a set of pseudo-cubes whose centres are
 a {\it quasi-lattice} with separation~$\lambda^{-j\frac{\alpha-1}{n-1}}$; see~\cite[Lemma 4]{LuR2}.
 That is to say, for any $\bar{y}\in B(0,1/2) \cap \lambda^{-j\frac{\alpha-1}{n-1}} \mathbb{Z}^{n-1}$
 there exists a unique centre~$\bar{x}$ 
 satisfying~$|\bar{x} - \bar{y}| \leq \lambda^{-j\frac{\alpha-1}{n-1}}$.

 Using a density theorem due to Falconer~\cite{Falconer1} (see also \cite[Proposition 8.5]{Falconer2} for a similar theorem), 
the positivity of the $\beta'$--Hausdorff measure of~$\Gamma_{\!x_1}$, for any $\beta' < \beta$, is a 
consequence of the following density property
 \begin{equation}\label{WWNTP}
    \liminf_{j \to \infty} \HH^{\beta}_{\infty} ( \Gamma^j_{\!x_1} \cap Q(\bar{x}, \delta) ) \geq c\delta^{\beta} , 
    \qquad 
    \forall\ Q(\bar{x}, \delta)\subset B(0,1/2),
    \quad
    \forall\ \delta >0 \, .
   \end{equation}
   Thus it would be sufficient for us to show that \eqref{WWNTP} holds for~$\beta \in ( \frac{(n-1)(2\alpha +1)}{2(n+1)}, \alpha-1 )$.   
  Essentially this means that the most efficient way to cover~$\Gamma^j_{\!x_1} \cap Q(\bar{x}, \delta)$ is with 
   a single cube of side~$\delta$. 
   The only real competitor is the cover
   that consists of the disjoint union of cubes of side-length $\varepsilon_{2} \lambda^{-j}$ placed on the 
   top of the pseudo-cubes of the quasi-lattice. However, this cover is costed at
   $$
   \sum_{i} \delta_i^\beta \simeq  \left( \frac{\delta}{ \lambda^{-j\frac{\alpha-1}{n-1}}} \right)^{\!\!n-1} \!\!\! (\varepsilon_{2} \lambda^{-j})^\beta
   =\varepsilon_{2}^{\beta}\delta^{n-1} \lambda^{j(\alpha-1-\beta)} \, ,
   $$
  which diverges as $j\to\infty$ (recalling that $\beta < \alpha-1$). 
  The remaining coverings are ruled out in exactly the same way as in~\cite[Section 4]{LuR2}.
  The only requirement is that the $\HH^{\beta}_{\infty}$-content of the pseudo-cubes is comparable to that 
  of the  actual cubes, which we have already observed, so the proof is complete.
   \hfill $\Box$ 
 %
 %

\end{document}